\tikzset{snake it/.style={decorate, decoration=snake}}
\newtheorem{thm}{Theorem}
\newtheorem{df}[thm]{Definition}
\newtheorem{lem}[thm]{Lemma}
\newtheorem{cor}[thm]{Corollary}
\newtheorem{rem}[thm]{Remark}
\newcommand{\abs}[1]{\lvert#1\rvert}
\newcommand{\clpha}{\overline{\alpha}}
\DeclareMathOperator{\LZSet}{LZSet}
\DeclareMathOperator{\LZJD}{LZJD}
\begin{document}
	\title{
	Interpolating between the Jaccard distance and an analogue of the normalized information distance\thanks{This work was partially supported by grants from the Simons Foundation (\#704836 to Bj\o rn Kjos-Hanssen) and Decision Research Corporation
				(University of Hawai\textquoteleft i Foundation Account \#129-4770-4).}}
	\author{Bj{\o}rn Kjos-Hanssen}
	\maketitle
	\begin{abstract}
Jim{\'e}nez, Becerra, and Gelbukh (2013) defined a family of ``symmetric Tversky ratio models'' $S_{\alpha,\beta}$, $0\le\alpha\le 1$, $\beta>0$. Each function $D_{\alpha,\beta}=1-S_{\alpha,\beta}$ is a semimetric on the powerset of a given finite set.

We show that $D_{\alpha,\beta}$ is a metric if and only if $0\le\alpha \le \frac12$ and $\beta\ge 1/(1-\alpha)$.
This result is formally verified in the Lean proof assistant.

The extreme points of this parametrized space of metrics are
$\mathcal V_1=D_{1/2,2}$, the Jaccard distance, and
$\mathcal V_{\infty}=D_{0,1}$, an analogue of the normalized information distance of M.~Li, Chen, X.~Li, Ma, and Vit\'anyi (2004).

As a second interpolation, in general we also show that $\mathcal V_p$ is a metric, $1\le p\le\infty$, where
\[
\Delta_p(A,B)=(\abs{B\setminus A}^p+\abs{A\setminus B}^p)^{1/p},
\]
\[
\mathcal V_p(A,B)=\frac{\Delta_p(A,B)}{\abs{A\cap B} + \Delta_p(A,B)}.
\]
	\end{abstract}
	\section{Introduction}

		Distance measures (metrics), are used in a wide variety of scientific contexts.
		In bioinformatics, M.~Li, Badger, Chen, Kwong, and Kearney \cite{Li2001AnIS} introduced an information-based sequence distance.
		In an information-theoretical setting, M.~Li, Chen, X.~Li, Ma and Vit{\'a}nyi \cite{MR2103495} rejected the distance of \cite{Li2001AnIS} in favor of a
		\emph{normalized information distance} (NID).
		The Encyclopedia of Distances \cite{MR3559482} describes the NID on page 205 out of 583, as
		\[
			\frac{
				\max\{
					K(x\mid y^*),K(y\mid x^*)
				\}
			}{
				\max\{
					K(x),K(y)
				\}
			}
		\]
		where $K(x\mid y^*)$ is the Kolmogorov complexity of $x$ given a shortest program $y^*$ to compute $y$.
		It is equivalent to be given $y$ itself in hard-coded form:
		\[
			\frac{
				\max\{
					K(x\mid y),K(y\mid x)
				\}
			}{
				\max\{
					K(x),K(y)
				\}
			}
		\]
		Another formulation (see \cite[page 8]{MR2103495}) is
		\[
			\frac{K(x,y)-\min\{K(x),K(y)\}}{\max\{K(x),K(y)\}}.
		\]

		The fact that the NID is in a sense a normalized metric is proved in~\cite{MR2103495}.
		Then in 2017, while studying malware detection, Raff and Nicholas \cite{Raff2017AnAT} suggested Lempel--Ziv Jaccard distance (LZJD) as a practical alternative to NID.
		As we shall see, this is a metric.
		In a way this constitutes a full circle: the distance in \cite{Li2001AnIS} is itself essentially a Jaccard distance,
		and the LZJD is related to it as Lempel--Ziv complexity is to Kolmogorov complexity.
		In the present paper we aim to shed light on this back-and-forth by showing that
		the NID and Jaccard distances constitute the endpoints of a parametrized family of metrics.

		For comparison, the Jaccard distance between two sets $X$ and $Y$, and our analogue of the NID, are as follows:
		\begin{eqnarray}
			J_1(X,Y)&=&\frac{\abs{X\setminus Y}+\abs{Y\setminus X}}{\abs{X\cup Y}} = 1 - \frac{\abs{X\cap Y}}{\abs{X\cup Y}}\\
			J_{\infty}(X,Y)&=&\frac{\max\{\abs{X\setminus Y}, \abs{Y\setminus X}\}}{\max\{\abs{X},\abs{Y}\}}\label{setNID}
		\end{eqnarray}
		Our main result \Cref{main} shows which interpolations between these two are metrics.
		The way we arrived at $J_{\infty}$ as an analogue of NID is via Lempel--Ziv complexity.
		While there are several variants \cite{MR389403,MR530215,MR507465}, the LZ 1978 complexity \cite{MR507465} of a sequence is the cardinality of a certain set, the dictionary.
		\begin{df}
			Let $\LZSet(A)$ be the Lempel--Ziv dictionary for a sequence $A$.
			We define
			LZ--Jaccard distance $\LZJD$ by
			\[
				\LZJD(A,B) = 1- \frac{\abs{\LZSet(A)\cap \LZSet(B)}}{\abs{\LZSet(A)\cup \LZSet(B)}}.
			\]
		\end{df}
		It is shown in \cite[Theorem 1]{Li2001AnIS} that the triangle inequality holds for a function which they call an information-based sequence distance.
		Later papers give it the notation $d_s$ in \cite[Definition V.1]{MR2103495}, and call their normalized information distance $d$.
		Raff and Nicholas \cite{Raff2017AnAT} introduced the LZJD and did not discuss the appearance of $d_s$ in \cite[Definition V.1]{MR2103495},
		even though they do cite \cite{MR2103495} (but not \cite{Li2001AnIS}).

		Kraskov et al.~\cite{Kraskov2003HierarchicalCB,Kraskov_2005} use $D$ and $D'$ for continuous analogues of $d_s$ and $d$ in \cite{MR2103495} (which they cite).
		The \emph{Encyclopedia} calls it the normalized information metric,
		\[
			\frac{H(X\mid Y)+H(X\mid Y)}{H(X,Y)} = 1 - \frac{I(X;Y)}{H(X,Y)}
		\]
		or Rajski distance \cite{MR0134805}.

		This $d_s$ was called $d$ by \cite{Li2001AnIS} --- see \Cref{favored}.
		\begin{table}
			\begin{tabular}{l l l}
				Reference					&	Jaccard notation 	& NID notation\\
				\cite{Li2001AnIS}			&	$d$		&\\
				\cite{MR2103495}			&	$d_s$	& $d$\\
				\cite{Kraskov_2005}			&	$D$		& $D'$\\
				\cite{Raff2017AnAT}			&	$\LZJD$	& NCD
			\end{tabular}
			\caption{Overview of notation used in the literature. (It seems that authors use simple names for their favored notions.)}\label{favored}
		\end{table}
		Conversely, \cite[near Definition V.1]{MR2103495} mentions mutual information.

		\begin{rem}
			Ridgway \cite{wiki:xxx} observed that the entropy-based distance $D$  is essentially a
			Jaccard distance. No explanation was given, but we attempt one as follows.
			Suppose $X_1,X_2,X_3,X_4$ are iid Bernoulli($p=1/2$) random variables, $Y$ is the random vector $(X_1,X_2,X_3)$ and $Z$ is $(X_2,X_3,X_4)$.
			Then $Y$ and $Z$ have two bits of mutual information $I(Y,Z)=2$.
			They have an entropy $H(Y)=H(Z)=3$ of three bits. Thus the relationship $H(Y,Z)=H(Y)+H(Z)-I(Y,Z)$
			becomes a Venn diagram relationship $\abs{\{X_1,X_2,X_3,X_4\}} = \abs{\{X_1,X_2,X_3\}} + \abs{\{X_2,X_3,X_4\}} - \abs{\{X_2,X_3\}}$.
			The relationship to Jaccard distance may not have been well known, as it is not mentioned in \cite{Kraskov_2005,10.1109/TKDE.2007.48,Li2001AnIS,1412045}.
		\end{rem}

		A more general setting is that of STRM (Symmetric Tversky Ratio Models), \Cref{symm}.
		These are variants of the Tversky index (\Cref{unsymm}) 
		proposed in \cite{DBLP:conf/starsem/JimenezBG13}.

		\subsection{Generalities about metrics}
		\begin{df}\label{october2020}
		Let $\mathcal X$ be a set.
			A \emph{metric} on $\mathcal X$ is a function
			$d : \mathcal X \times \mathcal X \to \mathbb R$ such that
			\begin{enumerate}
				\item\label{met-nonneg} $d(x, y) \ge 0$,
				\item\label{met-0} $d(x, y) = 0$ if and only if $x = y$,
				\item\label{met-symm} $d(x, y) = d(y, x)$ (symmetry),
				\item\label{met-tri} $d(x,y)\le d(x,z)+d(z,y)$ (the triangle inequality)
			\end{enumerate}
			for all $x,y,z\in\mathcal X$.
			If $d$ satisfies \Cref{met-nonneg}, \Cref{met-0}, \Cref{met-symm} but not necessarily \Cref{met-tri} then $d$ is called a \emph{semimetric}.
		\end{df}
		A basic exercise in \Cref{october2020} that we will make use of is \Cref{lin-comb-metric}.
		\begin{thm}\label{lin-comb-metric}
			 If $d_1$ and $d_2$ are metrics and $a,b$ are nonnegative constants, not both zero, then $ad_1+bd_2$ is a metric.
		\end{thm}
		\begin{proof}
			\Cref{met-nonneg} is immediate from \Cref{met-nonneg} for $d_1$ and $d_2$.

			\Cref{met-0}: Assume $ad_1(x,y)+bd_2(x,y)=0$. Then $ad_1(x,y)=0$ and $bd_2(x,y)=0$.
			Since $a,b$ are not both 0, we may assume $a>0$. Then $d_1(x,y)=0$ and hence $x=y$ by \Cref{met-0} for $d_1$.

			\Cref{met-symm}: We have $ad_1(x,y)+bd_2(x,y)=ad_1(y,x)+bd_2(y,x)$ by \Cref{met-symm} for $d_1$ and $d_2$.

			\Cref{met-tri}: By \Cref{met-tri} for $d_1$ and $d_2$ we have
			\begin{eqnarray*}
			ad_1(x,y)+bd_2(x,y)&\le& a(d_1(x,z)+d_1(z,y))+b(d_2(x,z)+d_2(z,y))\\ &=& (ad_1(x,z)+bd_2(x,z)) + (ad_2(z,y)+bd_2(z,y)).\qedhere			 
			\end{eqnarray*}
		\end{proof}
		\begin{lem}\label{mar29-2022}
			Let $d(x,y)$ be a metric and let $a(x,y)$ be a nonnegative symmetric function. If $a(x,z)\le a(x,y)+d(y,z)$ for all $x,y,z$,
			then $d'(x,y)=\frac{d(x,y)}{a(x,y)+d(x,y)}$,
			with $d'(x,y)=0$ if $d(x,y)=0$, is a metric.
		\end{lem}
		\begin{proof}
			As a piece of notation, let us write $d_{xy}=d(x,y)$ and $a_{xy}=a(x,y)$.
			As observated by \cite{210750}, in order to show
			\[
				\frac{d_{xy}}{a_{xy}+d_{xy}}\le  \frac{d_{xz}}{a_{xz}+d_{xz}} + \frac{d_{yz}}{a_{yz}+d_{yz}},
			\]
			it suffices to show the following pair of inequalities:
			\begin{eqnarray}
				\frac{d_{xy}}{a_{xy}+d_{xy}}\le &\frac{d_{xz}+d_{yz}}{a_{xy}+d_{xz}+d_{yz}}& \label{1}\\
				&\frac{d_{xz}+d_{yz}}{a_{xy}+d_{xz}+d_{yz}}& \le \frac{d_{xz}}{a_{xz}+d_{xz}} + \frac{d_{yz}}{a_{yz}+d_{yz}}\label{2}
			\end{eqnarray}
			Here \eqref{1} follows from $d$ being a metric, i.e., $d_{xy}\le d_{xz}+d_{yz}$, since
			\[
				c\ge 0<a\le b\implies \frac{a}{a+c}\le \frac{b}{b+c}.
			\]

			Next, \eqref{2} would follow from $a_{xy}+d_{yz}\ge a_{xz}$ and $a_{xy}+d_{xz}\ge a_{yz}$.
			By symmetry between $x$ and $y$ and since $a_{xy}=a_{yx}$ by assumption,
			it suffices to prove the first of these, $a_{xy}+d_{yz}\ge a_{xz}$, which holds by assumption.
		\end{proof}
        \subsection{Metrics on a family of finite sets}
        \begin{lem}\label{union-bound}
            For sets $A,B,C$, we have
            $\abs{A\setminus B}\le \abs{A\setminus C}+\abs{C\setminus B}$.
        \end{lem}
        \begin{proof}
            We have $A\setminus B\subseteq (A\setminus C)\cup (C\setminus B)$. Therefore, the result follows from the union bound for cardinality.
        \end{proof}
        \begin{lem}\label{jaccard-numerator}
             Let $f(A,B)=\abs{A\setminus B}+\abs{B\setminus A}$. Then $f$ is a metric.
        \end{lem}
        \begin{proof}
            The most nontrivial part is to prove the triangle inequality,
			\[
			\abs{A\setminus B}+\abs{B\setminus A}\le
			\abs{A\setminus C}+\abs{C\setminus A}
			+
			\abs{C\setminus B}+\abs{B\setminus C}.
			\]
			By the ``rotation identity'' $\abs{A\setminus C}+\abs{C\setminus B}+\abs{B\setminus A}=
			\abs{A\setminus B}+\abs{B\setminus C}+\abs{C\setminus A}$, this is equivalent to
			\[
			2(\abs{A\setminus B}+\abs{B\setminus A})\le
			2(\abs{A\setminus C}
			+
			\abs{C\setminus B}
			+
			\abs{B\setminus A}),
			\]
			which is immediate from \Cref{union-bound}.
        \end{proof}

        \begin{lem}\label{nid-numerator}
             Let $f(A,B)=\max\{\abs{A\setminus B},\abs{B\setminus A}\}$. Then $f$ is a metric.
        \end{lem}
        \begin{proof}
        For the triangle inequality, we need to show
			\[
			\max\{\abs{A\setminus B},\abs{B\setminus A}\}
			\le
			\max\{\abs{A\setminus C},\abs{C\setminus A}\}
			+
			\max\{\abs{C\setminus B},\abs{B\setminus C}\}.
			\]
			By symmetry we may assume that $\max\{\abs{A\setminus B},\abs{B\setminus A}\}=\abs{A\setminus B}$. Then, the result is immediate from \Cref{union-bound}.            
        \end{proof}
		For a real number $\alpha$, we write $\clpha=1-\alpha$.
		For finite sets $X,Y$ we define
		\[
			\tilde m(X,Y)=\min\{\abs{X\setminus Y}, \abs{Y\setminus X}\},
		\]
		\[
			\tilde M(X,Y)=\max\{\abs{X\setminus Y}, \abs{Y\setminus X}\}.
		\]
		\begin{lem}\label{empathy}
			Let $\delta:=\alpha \tilde m +\clpha \tilde M$. Let $X=\{0\}, Y=\{1\}, Z=\{0,1\}$. Then
			$\delta(X,Y)=1$, $\delta(X,Z)=\delta(Y,Z)=\clpha$.
		\end{lem}
		The proof of \Cref{empathy} is an immediate calculation.
		\begin{thm}\label{may17}
			$\delta_{\alpha}=\alpha \tilde m +\clpha \tilde M$ satisfies the triangle inequality  if and only if $0\le\alpha\le 1/2$.
		\end{thm}
		\begin{proof}
			We first show the \emph{only if} direction. By \Cref{empathy} the triangle inequality only holds for the example given there if $1\le 2\clpha$, i.e., $\alpha\le 1/2$.

			Now let us show the \emph{if} direction.
			If $\alpha\le 1/2$ then $\alpha\le\clpha$, so $\delta_{\alpha}=\alpha (\tilde m + \tilde M) + (\clpha-\alpha) \tilde M$ is a nontrivial nonnegative linear combination.
			Since $(\tilde m+\tilde M)(A,B)=\abs{A\setminus B}+\abs{B\setminus A}$ (\Cref{jaccard-numerator}) and
			$\tilde M(A,B)=\max\{\abs{A\setminus B},\abs{B\setminus A}\}$ (\Cref{nid-numerator}) are both metrics, the result follows from \Cref{lin-comb-metric}.
		\end{proof}
		\begin{lem}\label{to-motivate}
			Suppose $d$ is a metric on a collection of nonempty sets $\mathcal X$, with $d(X,Y)\le 2$ for all $X,Y\in\mathcal X$.
			Let $\hat{\mathcal X}=\mathcal X\cup\{\emptyset\}$ and define $\hat d:\hat{\mathcal X}\times\hat{\mathcal X}\to\mathbb R$ by
			stipulating that for $X,Y\in\mathcal X$,
			\[
				\hat d(X,Y)=d(X,Y);\quad
				d(X,\emptyset)=1=d(\emptyset,X);\quad
				d(\emptyset,\emptyset)=0.
			\]
			Then $\hat d$ is a metric on $\hat{\mathcal X}$.
		\end{lem}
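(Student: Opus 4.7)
The plan is to verify the three metric axioms for $\hat d$ by case analysis on which of the three points (or two, for symmetry and non-negativity) coincide with $\emptyset$. Non-negativity, symmetry, and the identity of indiscernibles are immediate from the definition of $\hat d$ and the corresponding properties of $d$: when all arguments lie in $\mathcal X$ we inherit the property from $d$, and the stipulated values $\hat d(X,\emptyset)=\hat d(\emptyset,X)=1>0$ and $\hat d(\emptyset,\emptyset)=0$ handle the remaining cases.

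The content of the lemma is thus the triangle inequality $\hat d(X,Z)\le \hat d(X,Y)+\hat d(Y,Z)$, which I would split into cases according to how many of $X,Y,Z$ equal $\emptyset$. If none are empty, the inequality reduces to the triangle inequality for $d$. If exactly one of $X$ or $Z$ is empty (say $X=\emptyset$, with $Y,Z\in\mathcal X$), then the left-hand side equals $1$ while the right-hand side equals $1+d(Y,Z)\ge 1$ by non-negativity of $d$; the symmetric subcase is identical. If exactly two are empty, at least one of the summands on the right is $1$ and the other is $0$ or $1$, while the left-hand side is at most $1$, so the inequality holds trivially. The all-empty case is $0\le 0$.

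The one subcase that actually uses a hypothesis beyond the metric axioms for $d$ is when $Y=\emptyset$ while $X,Z\in\mathcal X$: then we need $d(X,Z)=\hat d(X,Z)\le \hat d(X,\emptyset)+\hat d(\emptyset,Z)=1+1=2$, which is exactly the bound $d(X,Y)\le 2$ assumed in the hypothesis. This is the single ``obstacle'' in the proof, and it explains why the bound on the diameter of $\mathcal X$ appears in the statement; without it, inserting $\emptyset$ as a point at distance $1$ from everything could violate the triangle inequality. Assembling the cases yields the lemma.
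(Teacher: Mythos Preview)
Your case analysis is correct and complete; in particular you have correctly identified that the only nontrivial case is $Y=\emptyset$ with $X,Z\in\mathcal X$, where the hypothesis $d(X,Z)\le 2$ is exactly what is needed. The paper itself states this lemma \emph{without proof}, so there is no argument to compare against; your verification is the natural one.
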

		\begin{thm}\label{mar29-22-11am}
			Let $f(A,B)$ be a metric such that
			\[
				\abs{B\setminus A}\le f(A,B)
			\] for all $A,B$. Then the function $d$ given by
			\[
				d(A,B)=\begin{cases}
				\frac{f(A,B)}{\abs{A\cap B}+f(A,B)},&\text{if }\abs{A\cap B}+f(A,B)>0,\\
				0,&\text{otherwise},
				\end{cases}
			\]
			is a metric.
		\end{thm}
		\begin{proof}
			By \Cref{mar29-2022} (with $a_{x,y}=\abs{X\cap Y}$) we only need to verify that for all sets $A,B,C$,
			\[
			\abs{A\cap C}+f(A,B)\ge \abs{B\cap C}.
			\]
			And indeed, since tautologically $B\cap C\subseteq (B\setminus A)\cup (A\cap C)$, by the union bound we have
		$\abs{B\cap C}-\abs{A\cap C}\le \abs{B\setminus A}\le f(A,B)$.
		\end{proof}
		\begin{thm}\label{too-similar}
			Let $f(A,B)=m\min\{\abs{A\setminus B},\abs{B\setminus A}\}+M\max\{\abs{A\setminus B},\abs{B\setminus A}\}$ with $0<m\le M$ and $1\le M$.
			Then the function $d$ given by
			\[
			d(A,B)=
			\begin{cases}
				\frac{f(A,B)}{\abs{A\cap B}+ f(A,B)},&\text{if } A\cup B\ne\emptyset,\\
				0,&\text{otherwise},
			\end{cases}
			\]
			is a metric.
		\end{thm}
		\begin{proof}
			We have
			$f(A,B)=(m+M) \delta_{\alpha}(A,B)$ where $\alpha=\frac{m}{m+M}$. Since $m\le M$, $\alpha\le 1/2$, so $f$ satisfies the triangle inequality by \Cref{may17}.
			Since $m>0$, in fact $f$ is a metric.
			Using $M\ge 1$,
			\[f(A,B)\ge M\max\{\abs{A\setminus B},\abs{B\setminus A}\}\ge M \abs{B\setminus A}\ge \abs{B\setminus A},
			\]
			so that by \Cref{mar29-22-11am}, $d$ is a metric.
		\end{proof}
	\subsection{Tversky indices}

		\begin{df}[{\cite{tversky}}]\label{unsymm}
			For sets $X$ and $Y$ the Tversky index with parameters $\alpha,\beta\ge 0$ is a number between 0 and 1 given by
			\[
				S(X, Y) = \frac{\abs{X \cap Y}}{\abs{X \cap Y} + \alpha\abs{X \setminus Y} + \beta\abs{Y \setminus X}}.
			\]
			We also define the corresponding Tversky dissimilarity $d^T_{\alpha,\beta}$ by
			\[
				d^{T}_{\alpha,\beta}(X,Y) = 
				\begin{cases}
					1-S(X,Y)&\text{if }X\cup Y\ne\emptyset;\\
					0&\text{if }X=Y=\emptyset.
				\end{cases}
			\]
		\end{df}

		\begin{df}
			The Szymkiewicz–-Simpson coefficient is defined by
			\[
				 \operatorname{overlap}(X,Y) = \frac{\abs{X \cap Y}}{\min(\abs{X},\abs{Y})}
			\]
		\end{df}
		We may note that $\operatorname{overlap}(X,Y)=1$ whenever $X\subseteq Y$ or $Y\subseteq X$, so that $1-\operatorname{overlap}$ is not a metric.
		\begin{df}\label{dice}
			The S{\o}rensen--Dice coefficient is defined by
			\[
				\frac{2\abs{X\cap Y}}{\abs{X}+\abs{Y}}.
			\]
		\end{df}
		\begin{df}[{\cite[Section 2]{DBLP:conf/starsem/JimenezBG13}}]\label{symm}
			Let $\mathcal X$ be a collection of finite sets. We define $S:\mathcal X\times\mathcal X\to \mathbb R$ as follows.
			The symmetric Tversky ratio model is defined by
			\[
				\mathbf{strm}(X,Y)=\frac{| X \cap Y |+\mathrm{bias}}{| X \cap Y |+\mathrm{bias}+\beta\left(\alpha \tilde m+(1-\alpha)\tilde M\right)}
			\]
			The unbiased symmetric TRM ($\mathbf{ustrm}$) is the case where $\mathrm{bias}=0$, which is the case we shall assume we are in for the rest of this paper.
			The Tversky semimetric $D_{\alpha,\beta}$ is defined by $D_{\alpha,\beta}(X,Y)=1-\mathbf{ustrm}(X,Y)$, or more precisely
			\[
			D_{\alpha,\beta}(X,Y) = 
			\begin{cases}
				\beta \frac{\alpha \tilde m + (1-\alpha)\tilde M}{\abs{X\cap Y}+\beta(\alpha \tilde m + (1-\alpha)\tilde M)}, & \text{if }X\cup Y\ne\emptyset;\\
				0 & \text{if }X=Y=\emptyset.
			\end{cases}
			\]
		\end{df}
		Note that for $\alpha = 1/2$, $\beta = 1$,
		the STRM is equivalent to the S{\o}rensen--Dice coefficient.
		Similarly,
		for $\alpha = 1/2$, $\beta = 2$, it is equivalent to Jaccard's coefficient.

		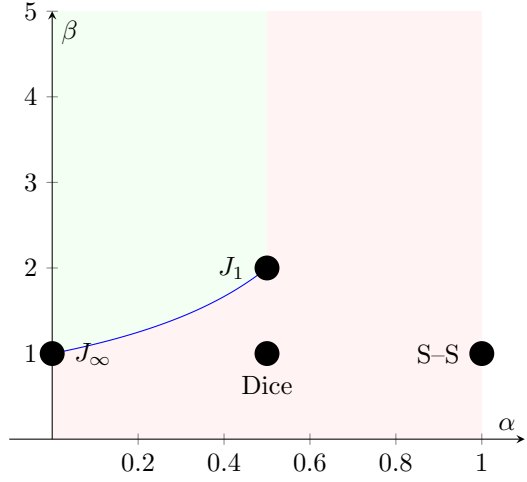
\begin{figure}
			\centering
			\begin{tikzpicture}[>=stealth]
				\begin{axis}[
					xmin=-0.1,xmax=1.1,
					ymin=0,ymax=5,
					axis x line=middle,
					axis y line=middle,
					axis line style=->,
					xlabel={$\alpha$},
					ylabel={$\beta$},
				]
					\addplot[name path=h,no marks,blue,<->] expression[domain=0:0.5,samples=100]{1/(1-x)};
					\path[name path=myaxis] (axis cs:0,5) -- (axis cs:0.5,5);
					\path[name path=topred] (axis cs:0.5,5) -- (axis cs:1,5);
					\path[name path=botred] (axis cs:0.5,0) -- (axis cs:1,0);
					\path[name path=botgreen](axis cs:0,0) -- (axis cs:0.5,0);

					\addplot [
							thick,
							color=green,
							fill=green, 
							fill opacity=0.05
						]
						fill between[
							of=h and myaxis,
						];
					\addplot [
							thick,
							color=red,
							fill=red, 
							fill opacity=0.05
						]
						fill between[
							of=topred and botred,
						];
					\addplot [
							thick,
							color=red,
							fill=red, 
							fill opacity=0.05
						]
						fill between[
							of=h and botgreen,
						];

					\node[label={0:{$J_\infty$ }},circle,fill] at (axis cs:0,1) {};
					\node[label={180:{ $J_1$ }},circle,fill] at (axis cs:0.5,2) {};
					\node[label={270:{Dice}},circle,fill] at (axis cs:0.5,1) {};
					\node[label={180:{S--S}},circle,fill] at (axis cs:1,1) {};

				\end{axis}
			\end{tikzpicture}
			\caption{A Tversky semimetric $D_{\alpha,\beta}$ is a metric if and only if $(\alpha,\beta)$ belongs to the green region.
			The parameter values corresponding to
			the Jaccard distance $J_1$, the analogue of normalized information distance analogue $J_{\infty}$,
			the S{\o}rensen--Dice semimetric, and
			the Szymkiewicz–-Simpson semimetric are indicated.}\label{niceGraph}
		\end{figure}

	\section{Tversky metrics}
		\begin{thm}\label{well}
			The function $D_{\alpha,\beta}$ is a metric only if $\beta\ge 1/(1-\alpha)$.
		\end{thm}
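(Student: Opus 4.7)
The plan is to exhibit a single three-point configuration that violates the triangle inequality whenever $\beta<1/(1-\alpha)$. The natural candidate, which also served the only-if direction of \Cref{may17}, is $X=\{0\}$, $Y=\{1\}$, $Z=\{0,1\}$: here $X$ and $Y$ are disjoint singletons sitting inside their union $Z$, so $D_{\alpha,\beta}(X,Y)$ is forced to its maximum value $1$ while each of $D_{\alpha,\beta}(X,Z)$ and $D_{\alpha,\beta}(Z,Y)$ is pulled down by a nontrivial intersection.

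Next I would carry out the routine computation on this triple. For the pair $(X,Y)$ we have $|X\cap Y|=0$ and $m=M=1$, so $\alpha m+(1-\alpha)M=1$, and hence $D_{\alpha,\beta}(X,Y)=\beta/\beta=1$. For each of the pairs $(X,Z)$ and $(Z,Y)$, the intersection has size $1$, with $m=0$ and $M=1$, so $\alpha m+(1-\alpha)M=1-\alpha$, giving
\[
D_{\alpha,\beta}(X,Z)=D_{\alpha,\beta}(Z,Y)=\frac{\beta(1-\alpha)}{1+\beta(1-\alpha)}.
\]
The triangle inequality $D_{\alpha,\beta}(X,Y)\le D_{\alpha,\beta}(X,Z)+D_{\alpha,\beta}(Z,Y)$ therefore reads
\[
1\le\frac{2\beta(1-\alpha)}{1+\beta(1-\alpha)},
\]
which, after clearing the denominator, reduces to $\beta(1-\alpha)\ge 1$, i.e., $\beta\ge 1/(1-\alpha)$. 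The boundary case $\alpha=1$ drops out for free: the right-hand side collapses to $0+0$, so the triangle inequality fails for every finite $\beta$, consistent with reading $1/(1-\alpha)$ as $+\infty$.

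There is essentially no obstacle: the work lies entirely in choosing the right triple. Once one observes that pushing $|X\cap Y|$ to $0$ while keeping $|X\cap Z|$ and $|Z\cap Y|$ as large as possible maximizes the ratio on the left of the triangle inequality and minimizes the ratios on the right, the algebra is a single line. Thus the very counterexample that forced $\alpha\le 1/2$ in \Cref{may17} simultaneously forces the sharp lower bound on $\beta$.
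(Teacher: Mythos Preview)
Your proof is correct and essentially identical to the paper's: both use the triple $X=\{0\}$, $Y=\{1\}$, $Z=\{0,1\}$ from \Cref{may17}, compute the three values of $D_{\alpha,\beta}$, and reduce the triangle inequality to $\beta(1-\alpha)\ge 1$. The only cosmetic difference is that the paper drops the common factor $\beta$ before computing, while you keep it and obtain $D_{\alpha,\beta}(X,Y)=1$ directly.
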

		\begin{proof}
			Recall that with $D=D_{\alpha,\beta}$,
			\[
				D(X,Y)=\frac{\beta\delta}{\abs{X\cap Y}+\beta\delta}.
			\]
			By \Cref{empathy}, for the example given there we have
			\begin{eqnarray*}
				D(X,Y) &=& \frac{\beta\cdot 1}{0+\beta\cdot 1} = 1,
			\\
				D(X,Z) = D(Y,Z) &=& \frac{\beta\cdot\clpha}{1+\beta\cdot \clpha}.
			\end{eqnarray*}
			The triangle inequality is then equivalent to:
			\[
				1\le 2\frac{\beta\clpha}{1+\beta\clpha}\iff \beta\clpha\ge 1
		\iff\beta\ge 1/(1-\alpha).\qedhere
			\]
		\end{proof}
		In \Cref{pyEx} we use the interval notation on $\mathbb N$, given by $[a,a]=\{a\}$ and $[a,b]=[a,b-1]\cup\{b\}$.
		\begin{thm}\label{pyEx}
			The function $D_{\alpha,\beta}$ is a metric on all finite power sets
			only if $\alpha\le 1/2$.
		\end{thm}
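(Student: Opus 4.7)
The plan is to extend the counterexample from \Cref{may17} (which defeated the triangle inequality for $\delta$ via $X=\{0\}, Y=\{1\}, Z=\{0,1\}$) to $D_{\alpha,\beta}$ by padding all three sets with a large common subset. The obstacle to using the bare example is that, because $|X\cap Y|=0$ there, the triangle inequality for $D_{\alpha,\beta}$ on it reduces to exactly the \Cref{well} condition $\beta\ge 1/(1-\alpha)$, which is satisfiable for any $\alpha>1/2$; any valid counterexample must therefore scale in a way that continues to bite as $\beta$ grows.

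Concretely, I would fix a finite set $A$ disjoint from $\{0,1\}$ and set
\[
X=\{0\}\cup A,\qquad Y=\{1\}\cup A,\qquad Z=\{0,1\}\cup A.
\]
Regardless of $|A|$, the sets $X\setminus Y$, $Y\setminus X$, $Z\setminus X$, $Z\setminus Y$ are singletons and $X\setminus Z=Y\setminus Z=\emptyset$, so $\delta(X,Y)=1$ and $\delta(X,Z)=\delta(Y,Z)=1-\alpha$ throughout. The intersection sizes $|X\cap Y|=|A|$ and $|X\cap Z|=|Y\cap Z|=|A|+1$ do grow with $|A|$, and substituting into the definition of $D_{\alpha,\beta}$ reduces the triangle inequality (after clearing denominators) to a linear condition in $|A|$ of the form $(2\alpha-1)|A|+1\le\beta(1-\alpha)$.

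For $\alpha>1/2$ the coefficient $2\alpha-1$ is strictly positive, so taking $|A|$ large enough (depending on $\alpha,\beta$) violates this condition and hence the triangle inequality, contradicting the assumption that $D_{\alpha,\beta}$ is a metric. The main conceptual step is recognizing that the intersection-inflation parameter $|A|$ decouples the decay rates of the two sides of the triangle inequality: both $D_{\alpha,\beta}(X,Y)$ and $D_{\alpha,\beta}(X,Z)$ vanish like $1/|A|$, but their ratio tends to the limit $1/(1-\alpha)$, which exceeds $2$ precisely when $\alpha>1/2$.
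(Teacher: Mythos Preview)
Your proposal is correct and takes essentially the same approach as the paper: pad the three-set counterexample with a large common part $A$ so that the triangle inequality for $D_{\alpha,\beta}$ reduces to a linear condition in $|A|$ whose leading coefficient is $2\alpha-1$, forcing failure for $\alpha>1/2$ once $|A|$ is large. The only cosmetic difference is that the paper makes the intermediate point $Z$ the \emph{subset} (taking $Z_n$ of size $n$ and $X_n=Z_n\cup\{2\}$, $Y_n=Z_n\cup\{1\}$), whereas you make $Z=\{0,1\}\cup A$ the \emph{superset}; this shifts the resulting inequality by the constant $+1$ but does not affect the argument.
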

		\begin{proof}
			Suppose $\alpha>1/2$.
			Then $2\clpha<1$.
			Let $n$ be an integer with $n>\frac{\beta\clpha}{1-2\clpha}$.		Let $X_n=[0,n]$, and $Y_n=[1,n+1]$, and $Z_n=[1,n]$.
			The triangle inequality says
			\begin{eqnarray*}
				\beta\frac{1}{n+\beta\cdot 1} = D(X_n,Y_n)&\le& D(X_n,Z_n)+D(Z_n,Y_n)=2\beta \frac{\clpha}{n+\beta\clpha}
			\\
				n+\beta\clpha &\le& 2 \clpha(n+\beta)
			\\
				n(1-2\clpha)&\le& \beta\clpha
			\end{eqnarray*}

			Then the triangle inequality does not hold, so $D_{\alpha,\beta}$ is not a metric on
			the power set of $[0,n+1]$.
		\end{proof}

		\begin{thm}\label{main}
			Let $0\le\alpha\le 1$ and $\beta> 0$.
			Then $D_{\alpha,\beta}$ is a metric if and only if $0\le\alpha\le 1/2$ and $\beta\ge 1/(1-\alpha)$.
		\end{thm}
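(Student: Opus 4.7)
The plan is to prove both directions. For \textbf{necessity} I produce explicit counterexamples. When $\beta(1-\alpha)<1$ (and $\alpha<1$), the triple $X=\{1,\dots,n\}$, $Y=\{1,\dots,2n\}$, $Z=\{n+1,\dots,2n\}$ gives $D_{\alpha,\beta}(X,Z)=1$ but $D_{\alpha,\beta}(X,Y)=D_{\alpha,\beta}(Y,Z)=\beta(1-\alpha)/(1+\beta(1-\alpha))$, forcing $\beta(1-\alpha)\ge 1$; the edge case $\alpha=1$ is ruled out because then $D_{1,\beta}(X,X\cup\{\ast\})=0$ with $X\ne X\cup\{\ast\}$. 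For $\alpha>1/2$ and any $\beta>0$, the triple $X=\{1,\dots,k\}$, $Y=X\cup\{k+1\}$, $Z=X\cup\{k+2\}$ reduces the triangle inequality $D(Y,Z)\le D(X,Y)+D(X,Z)$ to $k(2\alpha-1)\le\beta(1-\alpha)$, which fails for any integer $k>\beta(1-\alpha)/(2\alpha-1)$.

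For \textbf{sufficiency}, step one is a Möbius reduction along $\beta$. Writing $f=\alpha m+(1-\alpha)M$ and $c=\abs{X\cap Y}$, so that $D_{\alpha,\beta}(X,Y)=\beta f/(c+\beta f)$, direct algebra yields
\[
D_{\alpha,\beta}(X,Y)=\phi\bigl(D_{\alpha,\beta_0}(X,Y)\bigr),\qquad \phi(u)=\frac{\beta u}{\beta_0+(\beta-\beta_0)u},
\]
where $\phi$ depends only on $\beta,\beta_0$. For $\beta\ge\beta_0$ the map $\phi$ is increasing, concave on $[0,1]$, and satisfies $\phi(0)=0$; it is therefore subadditive, and composition with $\phi$ preserves the triangle inequality. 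Taking $\beta_0=1/(1-\alpha)$ reduces the problem to showing that $D_{\alpha,1/(1-\alpha)}$ is a metric for each $\alpha\in[0,1/2]$. Setting $\lambda=\alpha/(1-\alpha)\in[0,1]$ puts this boundary family in the clean form $D_\lambda(X,Y)=(\lambda m+M)/(\max(\abs{X},\abs{Y})+\lambda m)$, which interpolates between $J_\infty=D_0$ and $J_1=D_1$.

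\textbf{Main obstacle.} Verifying the triangle inequality for $D_\lambda$ at intermediate $\lambda\in(0,1)$ is the hardest step. The Jaccard endpoint $D_1$ is classical, and $D_0=J_\infty$ admits a short case analysis on the ordering of $\abs{X},\abs{Y},\abs{Z}$. For $\lambda\in(0,1)$ one cannot simply invoke the endpoint cases: although $D_\lambda=w\,J_\infty+(1-w)\,J_1$ with $w(X,Y)=\max(\abs{X},\abs{Y})(1-\lambda)/(\max(\abs{X},\abs{Y})+\lambda m)\in[0,1]$, the weight $w$ depends on the pair, so the convex-combination argument does not deliver the triangle inequality at intermediate $\lambda$ from the endpoints. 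My plan is to clear denominators in the triangle inequality for $D_\lambda$ and verify the resulting polynomial inequality in $\lambda$ and the seven Venn-diagram cell-sizes of $X,Y,Z$ by a case analysis on the pairwise size-orderings (which fix which of $\abs{X\setminus Y},\abs{Y\setminus X}$ is larger in each pair). The inequality is elementary but unwieldy, which is precisely why the verification lends itself to a proof assistant such as Lean.
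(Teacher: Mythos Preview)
Your necessity arguments are essentially those of the paper (\Cref{well} and \Cref{pyEx}), with cosmetic differences in the concrete sets chosen. Your M\"obius reduction in $\beta$ is correct and is a pleasant conceptual alternative to the paper's elementary \Cref{trivial}/\Cref{lean-in}: both establish that the set of admissible $\beta$ is upward closed, but your concavity/subadditivity route is cleaner than the bare algebra (one minor quibble: you need $\phi$ concave on $[0,2]$, not just $[0,1]$, since $D(X,Z)+D(Z,Y)$ can exceed $1$; fortunately $\phi$ is concave on all of $[0,\infty)$).

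The genuine gap is the boundary case $\beta=1/(1-\alpha)$. You correctly identify this as the main obstacle, correctly observe that the convex-combination-of-endpoints idea fails because the weight depends on the pair, and then announce a \emph{plan} to clear denominators and grind through a polynomial inequality in seven Venn variables, deferring to Lean. That is not a proof; it is precisely the step that needs an idea.

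The paper supplies that idea. Writing $D_{\alpha,1/(1-\alpha)}=\delta/(a+\delta)$ with $\delta(X,Y)=\alpha m+(1-\alpha)M$ and $a(X,Y)=|X\cap Y|$, it first shows (\Cref{may17}) that $\delta$ itself is a metric for $\alpha\le 1/2$, via a short case analysis on the ordering of $|X|,|Y|,|Z|$. Then the triangle inequality for $\delta/(a+\delta)$ is obtained by a two-step chain: the inequality $\frac{\delta_{xy}}{a_{xy}+\delta_{xy}}\le\frac{\delta_{xz}+\delta_{zy}}{a_{xy}+\delta_{xz}+\delta_{zy}}$ follows from $\delta$ being a metric, and the remaining inequality $\frac{\delta_{xz}+\delta_{zy}}{a_{xy}+\delta_{xz}+\delta_{zy}}\le\frac{\delta_{xz}}{a_{xz}+\delta_{xz}}+\frac{\delta_{zy}}{a_{zy}+\delta_{zy}}$ follows from $a_{xy}+\delta_{yz}\ge a_{xz}$ (and its symmetric twin). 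That last condition reduces, after dropping the nonnegative $\gamma\min$ term, to the two-line set-theoretic fact $|X\cap Y|+\max\{|Y\setminus Z|,|Z\setminus Y|\}\ge |X\cap Z|$. This bypasses the ``unwieldy polynomial inequality'' entirely and is what your proposal is missing.
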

		\begin{proof}
		\Cref{well} and \Cref{pyEx} give the necessary condition. Since
			\[
			D_{\alpha,\beta} = 
			\begin{cases}
				\beta \frac{\alpha \tilde m + (1-\alpha)\tilde M}{\abs{X\cap Y}+\beta(\alpha \tilde m + (1-\alpha)\tilde M)}, & \text{if }X\cup Y\ne\emptyset,\\
				0 & \text{otherwise},
			\end{cases}
			\]
			where $\tilde m$ is the minimum of the set differences and $\tilde M$ is the maximum,
			we can let $f(A,B)=\beta(\alpha \tilde m(A,B)+(1-\alpha)\tilde M(A,B))$.
			Then with the constants $m=\beta\alpha$ and $M=\beta\clpha$, we can apply \Cref{too-similar}.
		\end{proof}

		\Cref{main} is illustrated in \Cref{niceGraph}. We have formally proved \Cref{main} in the Lean theorem prover.
		The Github repository can be found at \cite{git}.

	\subsection{A converse to Gragera and Suppakitpaisarn}
		\begin{thm}[{Gragera and Suppakit\-paisarn \cite{MR3492464,MR3775059}}]\label{nother}
			 The optimal constant $\rho$ such that $d^T_{\alpha,\beta}(X,Y)\le \rho(d^T_{\alpha,\beta}(X,Y)+d^T_{\alpha,\beta}(Y,Z))$ for all $X,Y,Z$
			 is
			\[
				\frac12\left(1+\sqrt{\frac{1}{\alpha\beta}}\right).
			\]
		\end{thm}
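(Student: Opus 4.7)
The plan is to compute
\[
\rho^* := \sup_{X,Y,Z}\, \frac{d^T_{\alpha,\beta}(X,Z)}{d^T_{\alpha,\beta}(X,Y) + d^T_{\alpha,\beta}(Y,Z)},
\]
the supremum being taken over triples for which the denominator is positive (so the stated inequality should be read with $(X,Z)$ on the left-hand side), and to identify $\rho^*$ with $\tfrac12\bigl(1+1/\sqrt{\alpha\beta}\bigr)$. I would parametrize $X,Y,Z$ by the seven Venn-diagram atom cardinalities
\[
a=\abs{X\cap Y\cap Z},\ b=\abs{(X\cap Y)\setminus Z},\ c=\abs{(Y\cap Z)\setminus X},\ d=\abs{(X\cap Z)\setminus Y},
\]
\[
e=\abs{X\setminus(Y\cup Z)},\ f=\abs{Y\setminus(X\cup Z)},\ g=\abs{Z\setminus(X\cup Y)};
\]
each of the three Tversky dissimilarities is then an explicit rational function of these seven non-negative reals, and the overall ratio $R$ is homogeneous of degree zero, so it lives on the cone $[0,\infty)^7$ minus the origin.

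The key step is to reduce to two free variables by coordinate-wise monotonicity. One checks directly that raising $d$ shrinks the numerator $d^T(X,Z)$ and enlarges both denominator terms, forcing $d=0$ at the supremum; analogous (slightly longer) sign checks on $\partial R/\partial a$, $\partial R/\partial e$, $\partial R/\partial f$, $\partial R/\partial g$ force the other four to vanish as well. One is left with only $b,c>0$, whence $d^T(X,Z)=1$ and
\[
d^T(X,Y)+d^T(Y,Z)=\frac{\beta c}{b+\beta c}+\frac{\alpha b}{c+\alpha b}.
\]
Writing $r=b/c$, the first-order condition $\alpha(r+\beta)^2=\beta(1+\alpha r)^2$ simplifies (assuming $\alpha\beta\ne 1$) to $\alpha r^2=\beta$, so $r^\ast=\sqrt{\beta/\alpha}$. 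Substitution gives the denominator the value $2\sqrt{\alpha\beta}/(1+\sqrt{\alpha\beta})$, whose reciprocal is exactly $\tfrac12(1+1/\sqrt{\alpha\beta})$. Tightness then follows by choosing $X,Z$ disjoint with $\abs{X}/\abs{Z}$ a rational approximation to $\sqrt{\beta/\alpha}$ and $Y=X\cup Z$; sending the cardinalities to infinity drives $R$ to $\rho^*$.

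The main obstacle is the coordinate-wise reduction. The monotonicity in $d$ is immediate from the formulas, but those in $a,e,f,g$ require a careful sign analysis of rational functions in six variables, and the signs in fact depend on the parameter regime --- for instance, $\partial R/\partial a$ at $a=0$ carries the sign of $\alpha+\beta-2$. This indicates that the clean formula is the sharp constant precisely in the range $\alpha+\beta\le 2$, $\alpha\beta\le 1$ in which the Tversky dissimilarity is genuinely near-metric, which is also the regime relevant to the main theorem of the present paper. A Lagrange-multiplier approach would in principle handle all seven variables simultaneously, but the resulting algebraic system is unwieldy; the coordinate-by-coordinate route trades that for a cleaner but longer bookkeeping step.
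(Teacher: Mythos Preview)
The paper does not contain a proof of this theorem: it is quoted as a result of Gragera and Suppakit\-paisarn and attributed to \cite{MR3492464,MR3775059}, with no argument given in the present paper. There is therefore nothing to compare your attempt against here.

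A few remarks on your outline nonetheless. You correctly identify (and silently repair) the typo in the statement, taking $d^T_{\alpha,\beta}(X,Z)$ on the left-hand side. Your endgame is sound: once one has reduced to the two-atom configuration $b=\abs{(X\cap Y)\setminus Z}$, $c=\abs{(Y\cap Z)\setminus X}$ with all other atoms zero, the computation of the critical ratio $r^\ast=\sqrt{\beta/\alpha}$ and the resulting value $\tfrac12(1+1/\sqrt{\alpha\beta})$ is correct and clean. The weakness is exactly where you flag it: the coordinate-wise monotonicity step that kills $a,e,f,g$ is not fully justified, and you yourself note that the sign of $\partial R/\partial a$ depends on the parameter regime. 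As written this is a sketch with an acknowledged gap rather than a proof; to complete it you would either need to carry out the sign analysis in full (splitting into parameter cases), or to argue more structurally that the extremal configuration must have $X\cap Z=\emptyset$ and $Y=X\cup Z$, which is in fact how Gragera and Suppakit\-paisarn organize their argument.
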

		\begin{cor}\label{GScor}
			$d^T_{\alpha,\beta}$ is a metric only if $\alpha=\beta\ge 1$.
		\end{cor}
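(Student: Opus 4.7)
The plan is to derive the corollary by separately extracting two necessary conditions from the definition of a metric: symmetry and the triangle inequality. Symmetry will pin down $\alpha=\beta$, and then the cited theorem of Gragera and Suppakitpaisarn will force $\alpha\beta\ge 1$, i.e.\ $\alpha\ge 1$.

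First I would handle symmetry. Writing out
\[
d^T_{\alpha,\beta}(X,Y)=1-\frac{\abs{X\cap Y}}{\abs{X\cap Y}+\alpha\abs{X\setminus Y}+\beta\abs{Y\setminus X}},
\]
a direct computation shows that $d^T_{\alpha,\beta}(X,Y)=d^T_{\alpha,\beta}(Y,X)$ holds for all $X,Y$ iff
$(\alpha-\beta)(\abs{X\setminus Y}-\abs{Y\setminus X})=0$ for all $X,Y$. Testing on $X=\{1\}$, $Y=\{1,2\}$ makes the second factor nonzero, so symmetry forces $\alpha=\beta$.

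Next I would invoke \Cref{nother}. If $d^T_{\alpha,\beta}$ is a metric then the ordinary triangle inequality holds, so the optimal constant $\rho$ in the relaxed triangle inequality satisfies $\rho\le 1$. By Gragera and Suppakitpaisarn this optimal constant equals $\tfrac12(1+\sqrt{1/(\alpha\beta)})$, and so we get $\sqrt{1/(\alpha\beta)}\le 1$, i.e.\ $\alpha\beta\ge 1$. Combined with $\alpha=\beta$ from the symmetry step, this gives $\alpha^2\ge 1$, hence $\alpha=\beta\ge 1$.

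The argument is essentially a two-line consequence of \Cref{nother}; the only real content is the symmetry observation, which is routine. The main thing to be careful about is that \Cref{nother} gives a statement about the \emph{optimal} such $\rho$, so $\rho\le 1$ in its statement is exactly the triangle inequality; no further work is needed there.
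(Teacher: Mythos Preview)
Your proposal is correct and follows essentially the same approach as the paper's own proof: use symmetry to force $\alpha=\beta$, then invoke \Cref{nother} with $\rho\le 1$ to obtain $\alpha\beta\ge 1$. You simply spell out the symmetry step with an explicit example, whereas the paper dismisses it with ``clearly''.
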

		\begin{proof}
			Clearly, $\alpha=\beta$ is necessary to ensure $d^T_{\alpha,\beta}(X,Y)=d^T_{\alpha,\beta}(Y,X)$.
			Moreover $\rho\le 1$ is necessary, so \Cref{nother} gives $\alpha\beta\ge 1$.
		\end{proof}
		\Cref{cool} gives the converse to the Gragera and Suppakit\-paisarn inspired \Cref{GScor}:
		\begin{thm}\label{cool}
			The Tversky dissimilarity $d^T_{\alpha,\beta}$ is a metric iff $\alpha=\beta\ge 1$.
		\end{thm}
		\begin{proof}
			Suppose the Tversky dissimilarity $d^T_{\alpha,\beta}$ is a semimetric.
			Let $X,Y$ be sets with $\abs{X\cap Y}=\abs{X\setminus Y}=1$ and $\abs{Y\setminus X}=0$.
			Then
			\[
				1-\frac1{1+\beta} = d^T_{\alpha,\beta}(Y,X) = d^T_{\alpha,\beta}(X,Y)=1-\frac{1}{1+\alpha},
			\]
			hence $\alpha=\beta$. Let $\gamma=\alpha=\beta$.

			Now, $d^T_{\gamma,\gamma}=D_{\alpha_0,\beta_0}$ where $\alpha_0=1/2$ and $\beta_0 = 2\gamma$. Indeed, with 
			$\tilde m=\min\{\abs{X\setminus Y}, \abs{Y\setminus X}\}$ and $\tilde M=\max\{\abs{X\setminus Y}, \abs{Y\setminus X}\}$, since
			\[
				D_{\alpha_0,\beta_0} = \beta_0\frac{
					\alpha_0 \tilde m + (1-\alpha_0)\tilde M
				}{
					\abs{X\cap Y} + \beta_0\left[\alpha_0 \tilde m + (1-\alpha_0)\tilde M\right]
				},
			\]
			\[
				D_{\frac12,2\gamma} = 2\gamma\frac{
					\frac12 \tilde m + (1-\frac12)\tilde M
				}{
					\abs{X\cap Y} + 2\gamma\left[\frac12 \tilde m + (1-\frac12)\tilde M\right]
				}
			\]
			\[
			= \gamma\frac{
					\abs{X\setminus Y}+\abs{Y\setminus X}
							}{
								\abs{X\cap Y} + \gamma\left[\abs{X\setminus Y}+\abs{Y\setminus X}\right]
							}
				= 1 - \frac{\abs{X \cap Y}}{\abs{X \cap Y} + \gamma\abs{X \setminus Y} + \gamma\abs{Y \setminus X}} = d^T_{\gamma,\gamma}.
			\]
			By \Cref{main}, $d^T_{\gamma,\gamma}$ is a metric if and only if $\beta_0\ge 1/(1-\alpha_0)$.
			This is equivalent to $2\gamma \ge 2$, i.e., $\gamma\ge 1$.
		\end{proof}
		The truth or falsity of \Cref{cool} does not arise in Gragera and Suppakit\-paisarn's work,
		as they require $\alpha,\beta\le 1$ in their definition of Tversky index.
		We note that Tversky \cite{tversky} only required $\alpha,\beta\ge 0$.

	\section{Lebesgue-style metrics}
		Incidentally, the names of $J_1$ and $J_{\infty}$ come from the observation that they are special cases of $J_p$ given by
		\[
			J_p(A,B)=
			\left(2\cdot
			\frac{
			\abs{B\setminus A}^p+\abs{A\setminus B}^p
			}{
			\abs{A}^p+\abs{B}^p+\abs{B\setminus A}^p+\abs{A\setminus B}^p
			}
			\right)^{1/p}
			=
			\begin{cases}
			J_1(A,B) & p=1\\
			J_{\infty}(A,B) & p\to\infty
			\end{cases}
		\]
		which was suggested in \cite{10.1007/978-3-030-93100-1_8} as another possible means of interpolating between $J_1$ and $J_{\infty}$.
		We still conjecture that $J_2$ is a metric, but shall not attempt to prove it here. However:
		\begin{thm}\label{thm:j3}
			 $J_3$ is not a metric.
		\end{thm}
		\begin{proof}
			Let $A,B,C$ be sets with
			$A\cap B\subseteq C\subseteq A\cup B$,
			\begin{eqnarray*}
				\abs{A\cap B}&=&80,\\
				\abs{A\setminus (B\cup C)}=\abs{B\setminus (A\cup C)}&=&10,\text{ and}\\
				\abs{A\cap C\setminus B}=\abs{B\cap C\setminus A}&=&11.
			\end{eqnarray*}
			\begin{center}
				\begin{tikzpicture}
					\draw (0,0) circle (1) node[above,shift={(0,1)}] {${A}$};
					\draw (1.2,0) circle (1) node[above,shift={(0,1)}] {${B}$};
					\draw (.6,-1.04) circle (1) node[shift={(1.1,-.6)}] {${C}$};
					\node at (0,-.7) {11};
					\node at (1.2,-.7) {11};
					\node at (1.4,.2) {10};
					\node at (-0.4,.2) {10};
					\node at (.6,.3) {0};
					\node at (.3,-1.5) {0};
					\node at (.6,-.35) {80};
				\end{tikzpicture}
			\end{center}
			Then since
			\[
			 \frac{21^3}{101^3 + 21^3}
			=0.0089086 \not\le 0.0089061
			=
			2^3 \left( \frac{10^3+11^3}{101^3+102^3+10^3+11^3}\right),
			\]
			we have
			\begin{eqnarray*}
			J_3(A,B)&=&\left(2 \frac{2\cdot 21^3}{2\cdot 101^3 + 2\cdot 21^3}\right)^{1/3}\\
			&\not\le&
			2 \left(2 \frac{10^3+11^3}{101^3+102^3+10^3+11^3}\right)^{1/3}
			= J_3(A,C)+J_3(C,B).\qedhere
			\end{eqnarray*}
		\end{proof}

		Because of \Cref{thm:j3}, we searched for a better version of $J_p$, and found $\mathcal V_p$:
		\begin{df}
		For each $1\le p\le\infty$, let\footnote{Here, $\mathcal V$ can stand for Paul M.~B.~Vit\'anyi, who introduced the 
		author to the normalized information distance at a Dagstuhl workshop in 2006.}
		\begin{eqnarray*}
		\Delta_p(A,B)&=&(\abs{B\setminus A}^p+\abs{A\setminus B}^p)^{1/p},\text{ and}
		\\
		\mathcal V_p(A,B)&=&\frac{\Delta_p(A,B)}{\abs{A\cap B} + \Delta_p(A,B)}.
		\end{eqnarray*}
		\end{df}
		We have $\mathcal V_1=J_1$ and $\mathcal V_{\infty}:=\lim_{p\to\infty}\mathcal V_p=J_{\infty}$.

		In a way what is going on here is that we consider $L^p$ spaces instead of
		\[
			\frac1p L^1 + \left(1-\frac1p\right)L^{\infty}
		\]
		spaces.

		\begin{thm}\label{fun-factory}
			 For each $1\le p\le\infty$, $\Delta_p$ is a metric.
		\end{thm}
		\begin{proof}
			The nontrivial part is to prove the triangle inequality
			\[
			\Delta_p(A,B)\le\Delta_p(A,C)+\Delta_p(C,B).
			\]
			We introduce seven variables for the cardinalities in the Venn diagram of $A,B,C$, as follows.
			\begin{center}
				\begin{tikzpicture}
					\draw (0,0) circle (1) node[above,shift={(0,1)}] {${A}$};
					\draw (1.2,0) circle (1) node[above,shift={(0,1)}] {${B}$};
					\draw (.6,-1.04) circle (1) node[shift={(1.1,-.6)}] {${C}$};
					\node at (0,-.7) {$w$};
					\node at (1.2,-.7) {$y$};
					\node at (1.4,.2) {$z$};
					\node at (-0.4,.2) {$v$};
					\node at (.6,.3) {$a$};
					\node at (.6,-1.5) {$b$};
					\node at (.6,-.35) {$x$};
				\end{tikzpicture}
			\end{center}
			By the triangle inequality for Lebesgue $p$-norms,
			\begin{eqnarray*}
				\Delta_p(A,B)&=&
				\left((y+z)^p+(v+w)^p\right)^{1/p}\\
				&=&\|(y,v)+(z,w)\|_p\\
				&\le& \|(y,v)\|_p+\|(z,w)\|_p\\
				&=&
				\left(y^p+v^p\right)^{1/p}
				+
				\left(z^p+w^p\right)^{1/p}\\
				&\le& ((y+b)^p+((v+a)^p)^{1/p}
				+ ((z+a)^p+(w+b)^p)^{1/p}\\
				&=&\Delta_p(A,C)+\Delta_p(C,B).\qedhere
			\end{eqnarray*}
		\end{proof}

		\begin{thm}
			 For each $1\le p\le\infty$, $\mathcal V_p$ is a metric.
		\end{thm}
		\begin{proof}
			By \Cref{fun-factory} and \Cref{mar29-22-11am},
			we only have to check $\abs{B\setminus A}\le \Delta_p(A,B)$, which is immediate for $1\le p\le\infty$.
		\end{proof}

		Of special interest may be $\mathcal V_2$ as a canonical interpolant between
		$\mathcal V_1$, the Jaccard distance, and
		$\mathcal V_{\infty}=J_{\infty}$, the analogue of the NID.
		If $\abs{B\setminus A}=3$, $\abs{A\setminus B}=4$, and $\abs{A\cap B}=5$, then
		\begin{eqnarray*}
			\mathcal V_1(A,B)&=&7/12,\\
			\mathcal V_2(A,B)&=&1/2,\\
			\mathcal V_{\infty}(A,B)&=&4/9.
		\end{eqnarray*}

		Note that if $A\subseteq B$ then $\mathcal V_p(A,B)=\mathcal V_1(A,B)$ for all $p$.

	\section{Conclusion and applications}

		Many researchers have considered metrics based on sums or maxima, but we have shown that these need not be considered in ``isolation''
		in the sense that they form the endpoints of a family of metrics.

		As an example, the mutations of spike glycoproteins of coronaviruses are of interest in connection with diseases such as CoViD-19.
		We calculated several distance measures between peptide sequences for such proteins.
		The distance
		\[
			Z_{2,\alpha}(x_0,x_1)=\alpha\min(\abs{A_1},\abs{A_2})+\clpha\max(\abs{A_1},\abs{A_2})
		\]
		where $A_i$ is the set of subwords of length 2 in $x_i$ but not in $x_{1-i}$,
		counts how many subwords of length 2 appear in one sequence and not the other.


		\noindent We used the Ward linkage criterion for producing Newick trees using the \texttt{hclust} package for the Go programming language.
		The calculated phylogenetic trees were based on the metric $Z_{2,\alpha}$.

		We found one tree isomorphism class each for $0\le\alpha\le 0.21$, $0.22\le\alpha\le 0.36$, and $0.37\le\alpha\le 0.5$, respectively (\Cref{21}, \Cref{half}).
		\begin{figure}
			\begin{subfigure}[b]{0.4\textwidth}
				\includegraphics[width=5cm]{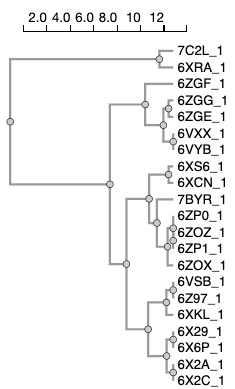}
			\end{subfigure}
			\begin{subfigure}[b]{0.4\textwidth}
				\includegraphics[width=5cm]{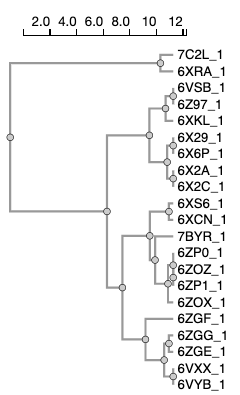}
			\end{subfigure}
			\caption{$\alpha=0.21$ and $0.36$.}\label{21}
		\end{figure}
		In \Cref{half} we are also including the tree produced using the Levenshtein edit distance in place of $Z_{2,\alpha}$.
		We see that the various intervals for $\alpha$ can correspond to ``better'' or ``worse'' agreement with other distance measures.
		Thus, we propose that rather than focusing on $\alpha=0$ and $\alpha=1/2$ exclusively, future work may consider the whole interval $[0,1/2]$.
		\begin{figure}
			\begin{subfigure}[b]{0.4\textwidth}
			\includegraphics[width=4cm]{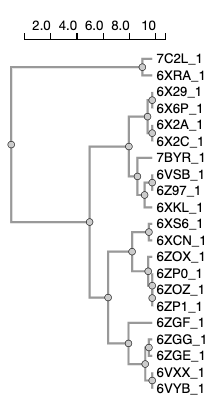}
			\end{subfigure}
			\begin{subfigure}[b]{0.4\textwidth}
			\includegraphics[width=4cm]{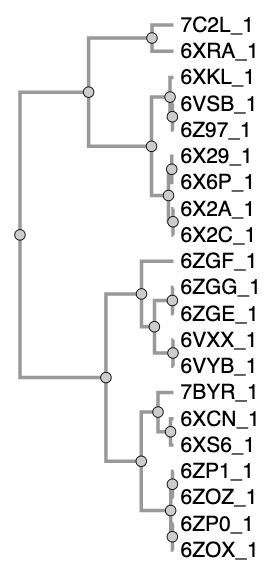}
			\end{subfigure}
			\caption{$\alpha=0.5$ and edit distance.}\label{half}
		\end{figure}

\bibliographystyle{plain}
\bibliography{nid}
\end{document}